\newtheorem{theorem}{Theorem}[section]
\title[Overpartitions with repeated smallest non-overlined part]{Combinatorial proofs of some identities on overpartitions with repeated smallest non-overlined part}
\author[N. D. Baruah]{Nayandeep Deka Baruah}
\address{Department of Mathematical Sciences, Tezpur University, Assam, India, PIN-784028}
\email{nayan@tezu.ernet.in, nayandeeptezu@gmail.com}
\author[H. Li]{Haijun Li}
\address{College of Mathematics and Statistics, Chongqing University, Chongqing 401331, P.R. China}
\email{lihaijun@cqu.edu.cn, lihaijune@163.com}
\author[P. J. Mahanta]{Pankaj Jyoti Mahanta}
\address{Department of Mathematical Sciences, Tezpur University, Assam, India, PIN-784028}
\email{pjm2099@gmail.com, msp25007@tezu.ac.in}
\keywords{Integer partition, Overpartition, Combinatorial equality, bijective combinatorics}
\subjclass[2020]{05A17, 05A15, 11P81.}
\begin{document}

\begin{abstract}
Let $\overline{\mathrm{spt}}k(n)$ denote the number of overpartitions of $n$ where the smallest non-overlined part, say $s(\pi)$, appears $k$ times and every overlined part is bigger than $s(\pi)$. Let $\overline{\mathrm{spt}}k_o(n)$ denote the number of overpartitions of $n$ where the smallest non-overlined part appears $k$ times, every overlined part is bigger than $s(\pi)$ and all parts other than $s(\pi)$ are incongruent modulo $2$ with $s(\pi)$. Also, let $b_e(k,n)$ (resp., $b_o(k,n)$) denote the number of overpartitions of $n$ counted by $\overline{\mathrm{spt}}k_o(n)$ where the number of parts greater than $s(\pi)$ is even (resp., odd), and let
$$\overline{\mathrm{spt}}k_o'(n)=b_e(k,n)-b_o(k,n).$$
Recently, Malik and Sarma (arXiv:2601.15601v1) expressed the generating functions of these partition functions in terms of linear combinations of $q$-series with polynomials in $q$ as coefficients. As corollaries, they derived some partition identities involving the functions for $k=1$ and sought for combinatorial proofs of their results. In this paper, we present some desired proofs.
\end{abstract}

\maketitle

\section{Introduction}
A {\it partition} $\pi$ of a positive integer $n$ is defined as a non-increasing sequence of positive integers $(\pi_1, \pi_2, \ldots, \pi_{\ell})$ such that $\pi_1+\pi_2+\cdots +\pi_{\ell}=n$. The terms $\pi_i$ are called the {\it parts} of $\pi$. For example, partitions of 4 are
$$(4), (3,1), (2,2), (2,1,1), (1,1,1,1).$$
For more details on integer partitions, see \cite{Andrews_1998}. An {\it overpartition} of $n$ is a partition of $n$ in which the first occurrence of each part may be overlined \cite{overpartitions}. For example, overpartitions of 4 are
\begin{align*}
& (4), (\overline{4}), (3,1), (\overline{3},1), (3,\overline{1}), (\overline{3},\overline{1}), (2,2), (\overline{2},2),\\
& (2,1,1), (\overline{2},1,1), (2,\overline{1},1), (\overline{2},\overline{1},1), (1,1,1,1), (\overline{1},1,1,1).
\end{align*}

In 2025, Andrews and El Bachraoui \cite{andrews2025generating} studied the number of partitions whose smallest part is repeated exactly $k$ times and the remaining parts are not repeated. They showed that the generating functions of these partition functions can be represented as linear combinations of the $q$-series with polynomials in $q$ as coefficients. As a particular case, they analytically derived several new identities and inequalities, and invited combinatorial proofs. In \cite{ndb2026pjm}, the first and third author gave combinatorial proofs for some of their identities and inequalities.

Recently, Malik and Sarma \cite{rishabh} extended the work of Andrews and El Bachraoui \cite{andrews2025generating} to the case of overpartitions. By proving their corresponding generating functions, they analytically established three partition identities. 

Before proceeding further, let us go through the following definitions.
\begin{itemize}
\item $\overline{\mathrm{spt}}k(n)$ denotes the number of overpartitions of $n$ where the smallest non-overlined part, say $s(\pi)$, appears $k$ times and every overlined part is bigger than $s(\pi)$.
\item $\overline{\mathrm{spt}}k_o(n)$ denotes the number of overpartitions of $n$ where the smallest non-overlined part appears $k$ times, every overlined part is bigger than $s(\pi)$ and all parts other than $s(\pi)$ are incongruent modulo $2$ with $s(\pi)$.
\item $\overline{p}_e(n)$ denotes the number of overpartitions of $n$ into even parts.
\item $\overline{p}_{ex}(n)$ denotes the number of overpartitions of $n$ with no non-overlined $1$'s.
\item $\overline{p}_{oex}(n)$ denotes the number of overpartitions of $n$ into odd parts with no non-overlined $1$'s.
\end{itemize}
We denote the sets of overpartitions counted by the partition functions above by $\overline{\mathrm{Spt}}k(n)$, $\overline{\mathrm{Spt}}k_o(n)$, $\overline{P}_e(n)$, $\overline{P}_{ex}(n)$, $\overline{P}_{oex}(n)$, respectively. In addition, let us look at the following definitions.
\begin{itemize}
\item Let $b_e(k,n)$ (resp., $b_o(k,n)$) denote the number of overpartitions $\pi$ of $n$ counted by $\overline{\mathrm{spt}}k_o(n)$ where the number of parts greater than $s(\pi)$ is even (resp., odd). Let
$$\overline{\mathrm{spt}}k_o'(n)=b_e(k,n)-b_o(k,n).$$
\item Let $c_e(n)$ (resp., $c_o(n)$) be the set of overpartitions counted by $\overline{p}_{oex}(n)$ where the number of parts is even (resp., odd). Let
$$\overline{p}_{oex}'(n)=c_e(n)-c_o(n).$$
\end{itemize}
We use $B_e(k,n)$, $B_o(k,n)$, $C_e(n)$, and $C_o(n)$ to denote the sets of overpartitions counted by $b_e(k,n)$, $b_o(k,n)$, $c_e(n)$, and $c_o(n)$, respectively.

The partition identities analytically established by Malik and Sarma \cite{rishabh} are as follows.

\begin{theorem}\cite[Corollary 4.1]{rishabh}\label{cor4.1}
For $n>1$, we have 
$$\overline{\mathrm{spt}}1(n)+\overline{\mathrm{spt}}1(n-1)= \overline{p}_{ex}(n).$$
\end{theorem}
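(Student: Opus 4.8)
The plan is to construct an explicit weight-preserving bijection $\Phi$ from $\overline{\mathrm{Spt}}1(n)\sqcup\overline{\mathrm{Spt}}1(n-1)$ onto $\overline{P}_{ex}(n)$; the identity then follows for every $n\ge 2$ (the two smallest cases $n=2,3$ can be checked by hand as a sanity test).

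First I would record the relevant structure. If $\pi\in\overline{\mathrm{Spt}}1(m)$ and $s=s(\pi)$, then $s$ is the \emph{unique} smallest part of $\pi$, it is non-overlined, it occurs exactly once, and every other part exceeds $s$; hence $\pi\setminus\{s\}$ is an overpartition of $m-s$ into parts $\ge s+1$. Dually, each $\lambda\in\overline{P}_{ex}(n)$ is of exactly one of two types: either all parts of $\lambda$ are $\ge 2$, or $\lambda$ contains a single $\overline{1}$ and $\lambda\setminus\{\overline{1}\}$ has all parts $\ge 2$. I would also isolate one technical fact about a fixed value $v$ inside an overpartition: the operation ``adjoin one copy of $v$, taking it overlined if no part equal to $v$ is currently present and non-overlined otherwise'' and the operation ``delete one copy of $v$, deleting a non-overlined $v$ if one is present and deleting $\overline{v}$ otherwise'' are mutually inverse bijections between the set of all local patterns at $v$ and the set of local patterns at $v$ in which $v$ occurs but not as a single non-overlined copy. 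This is exactly the bookkeeping needed to control the coexistence of $\overline{v}$ with non-overlined $v$'s.

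Now $\Phi$ is defined by cases. For $\pi\in\overline{\mathrm{Spt}}1(n)$: if $s(\pi)=1$, overline that $1$; if $s(\pi)\ge 2$, leave $\pi$ unchanged — in either case the result lies in $\overline{P}_{ex}(n)$. For $\pi\in\overline{\mathrm{Spt}}1(n-1)$ with $s=s(\pi)$ and $\mu=\pi\setminus\{s\}$: discard $s$ and adjoin to $\mu$ one copy of the part $s+1$ by the rule above (overlined iff $\mu$ has no part equal to $s+1$); this raises the weight to $n$ and yields an overpartition all of whose parts are $\ge 2$, with smallest part $s+1$, which by construction is \emph{not} of the form ``smallest part occurring as a single non-overlined copy''. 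For the inverse, start from $\lambda\in\overline{P}_{ex}(n)$: if $\lambda$ contains $\overline{1}$, de-overline it (landing in $\overline{\mathrm{Spt}}1(n)$); if $\lambda$ has all parts $\ge 2$ and its smallest part $t$ is a single non-overlined copy, return $\lambda$ (it lies in $A_n:=\{\pi\in\overline{\mathrm{Spt}}1(n):s(\pi)\ge 2\}$); otherwise $\lambda$ has all parts $\ge 2$ and smallest part $t\ge 2$ not of that special form, and one deletes a copy of $t$ by the rule above and then prepends the part $t-1$ (which is $1$ when $t=2$), landing in $\overline{\mathrm{Spt}}1(n-1)$.

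The step I expect to be the main obstacle is verifying that $\Phi$ and this candidate inverse are genuinely mutually inverse. This splits into: (i) checking that ``delete a $t$, then prepend $t-1$'' undoes ``discard $s$, then adjoin $s+1$'', which rests on the adjoin/delete fact being applied on exactly the correct domains; and (ii) matching the case boundaries — in particular showing that the overpartitions of $n$ with all parts $\ge 2$ produced from $\overline{\mathrm{Spt}}1(n-1)$ are \emph{precisely} those whose smallest part is not a single non-overlined copy, hence are disjoint from the image of $A_n$ and together with it exhaust $\{\lambda\in\overline{P}_{ex}(n):\text{all parts}\ge 2\}$, while the $s(\pi)=1$ part of $\overline{\mathrm{Spt}}1(n)$ supplies the remaining (``$\overline{1}$-type'') members. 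Each of these reduces to a short case analysis (on whether $\overline{t}$ is present, and on whether $t=2$); once they are in place, weight-preservation is visible from the construction and the theorem follows.
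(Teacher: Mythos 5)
Your construction coincides with the paper's proof: on $\overline{\mathrm{Spt}}1(n)$ you use the identity map for $s(\pi)\ge 2$ and overline the $1$ otherwise, and on $\overline{\mathrm{Spt}}1(n-1)$ your ``adjoin $s+1$, overlined iff no part $s+1$ is present'' rule is exactly the paper's case split $s_2(\pi)-s(\pi)>1$ versus $=1$. The resulting classification of $\overline{P}_{ex}(n)$ (contains $\overline{1}$; smallest part a single non-overlined copy; smallest part overlined or of multiplicity $\ge 2$) is the same as the paper's, so the proposal is correct and essentially identical to the published argument, with the inverse spelled out a bit more explicitly.
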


\begin{theorem}\cite[Corollary 4.2]{rishabh}\label{cor4.2}
For $n>2$, we have $$\overline{\mathrm{spt}}1_o(n)+\overline{\mathrm{spt}}1_o(n-2)=2\overline{p}_e(n-1)+\overline{p}_{oex}(n-1).$$
\end{theorem}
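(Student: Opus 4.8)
\section*{Proof proposal for Theorem \ref{cor4.2}}

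The plan is to produce an explicit weight-preserving bijection
\[
\Phi:\ \overline{\mathrm{Spt}}1_o(n)\ \sqcup\ \overline{\mathrm{Spt}}1_o(n-2)\ \longrightarrow\ \overline{P}_e(n-1)\ \sqcup\ \overline{P}_e(n-1)\ \sqcup\ \overline{P}_{oex}(n-1),
\]
the two copies of $\overline{P}_e(n-1)$ being what produces the coefficient $2$. This is the combinatorial shadow of the generating-function identity lying behind \cite[Corollary 4.2]{rishabh}: with $F(q)=\sum_m\overline{\mathrm{spt}}1_o(m)q^m$ one has $F(q)(1+q^2)=2q\prod_{j\ge1}\frac{1+q^{2j}}{1-q^{2j}}+q(1+q)\prod_{j\ge1}\frac{1+q^{2j+1}}{1-q^{2j+1}}-2q$, whose $q^n$-coefficient, for $n>1$, is precisely the asserted equality.

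Everything rests on the normal form: any $\pi$ in either set, with $s=s(\pi)$, is uniquely written $\pi=(s)\cup\rho$ where $\rho$ is an overpartition all of whose parts belong to the progression $s+1, s+3, s+5, \dots$ (the parts exceeding $s$, of parity opposite to $s$). I would define $\Phi$ as follows. If $\pi\in\overline{\mathrm{Spt}}1_o(n)$ with $s=1$: put $\Phi(\pi)=\rho$ in the \emph{first} copy of $\overline{P}_e(n-1)$. If $\pi\in\overline{\mathrm{Spt}}1_o(n)$ with $s\ge2$: put $\Phi(\pi)=(\overline{s-1})\cup\rho$. If $\pi\in\overline{\mathrm{Spt}}1_o(n-2)$: put $\Phi(\pi)=\rho\cup(s+1)$, i.e.\ delete the part $s$ and adjoin one non-overlined copy of $s+1$. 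In either of the last two cases $\Phi(\pi)$ is an overpartition of $n-1$ all of whose parts have one fixed parity; it is placed in the \emph{second} copy of $\overline{P}_e(n-1)$ when those parts are even and in $\overline{P}_{oex}(n-1)$ when they are odd. (In the odd case all parts are $\ge3$, except for the rule $(\overline{s-1})\cup\rho$ with $s=2$, which produces $(\overline{1})\cup\rho$ with $\rho$ into odd parts $\ge3$ — still a member of $\overline{P}_{oex}(n-1)$, since its only part equal to $1$ is an $\overline{1}$.) A one-line check confirms every image is an overpartition of $n-1$.

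The $s=1$ rule obviously bijects $\{\pi\in\overline{\mathrm{Spt}}1_o(n):s(\pi)=1\}$ with the first copy of $\overline{P}_e(n-1)$. The heart of the proof is the other two rules, analyzed one value $t\ge2$ at a time. Let $\mathcal{T}_t$ denote the set of overpartitions of $n-1$ all of whose parts belong to $t-1, t+1, t+3, \dots$. For $t\ge2$, the rule $\pi\mapsto(\overline{s-1})\cup\rho$ maps $\{\pi\in\overline{\mathrm{Spt}}1_o(n):s(\pi)=t\}$ bijectively onto the members of $\mathcal{T}_t$ in which the least part $t-1$ occurs exactly once and overlined; for $t\ge3$, the rule $\pi\mapsto\rho\cup(s+1)$ maps $\{\pi\in\overline{\mathrm{Spt}}1_o(n-2):s(\pi)=t-2\}$ bijectively onto the members of $\mathcal{T}_t$ in which $t-1$ occurs with at least one non-overlined copy. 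For $t\ge3$ these two families are complementary inside $\{\sigma\in\mathcal{T}_t:t-1\text{ is a part of }\sigma\}$, and each of the two rules is reversed by reading off $t$ (one more than the least part of the output) and undoing the single deletion/adjunction. Now an overpartition of $n-1$ lies in some $\{\sigma\in\mathcal{T}_t:t-1\text{ a part}\}$ with $t\ge3$ exactly when all its parts share one parity and the least part is $\ge2$; and at $t=2$ the rule $(\overline{s-1})\cup\rho$ alone contributes exactly the $\sigma\in\overline{P}_{oex}(n-1)$ that have $\overline{1}$ as a part. Summing over $t$, the $s\ge2$ part of $\overline{\mathrm{Spt}}1_o(n)$ together with all of $\overline{\mathrm{Spt}}1_o(n-2)$ is carried bijectively onto the overpartitions of $n-1$ into even parts, plus the overpartitions of $n-1$ into odd parts $\ge3$, plus the $\sigma\in\overline{P}_{oex}(n-1)$ with $\overline{1}$ a part; the first of these is the second copy of $\overline{P}_e(n-1)$, and the last two are precisely $\overline{P}_{oex}(n-1)$ (split according to whether $1$ occurs). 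Together with the $s=1$ rule, this makes $\Phi$ a bijection and yields the identity. The hypothesis $n>2$ is used only to guarantee that all the overpartitions of $n-1$ arising are nonempty, so that ``least part'' is meaningful.

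The step I expect to be the main obstacle is exactly this core claim: one must keep careful track of when the part $t-1$ is produced overlined and when non-overlined, verify that no overpartition of $n-1$ is produced twice (within a fixed $t$, for different $t$, or across the three rules), and confirm that reversing the $\overline{\mathrm{Spt}}1_o(n-2)$ rule on a member of $\mathcal{T}_t$ returns a genuine element of $\overline{\mathrm{Spt}}1_o(n-2)$ — one whose smallest non-overlined part is $t-2\ge1$, with all other parts larger and of the opposite parity and all overlined parts exceeding $t-2$. The remaining verifications — weights, parities, and the split $\overline{P}_{oex}(n-1)=\{\sigma:\overline{1}\text{ a part}\}\sqcup\{\sigma:1\text{ not a part}\}$ — are routine.
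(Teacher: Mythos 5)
Your proposal is correct and constructs exactly the same maps as the paper: delete the part $1$ when $s(\pi)=1$ (giving the first copy of $\overline{P}_e(n-1)$), replace $s(\pi)$ by $\overline{s(\pi)-1}$ for the remaining elements of $\overline{\mathrm{Spt}}1_o(n)$, and replace $s(\pi)$ by a non-overlined $s(\pi)+1$ for elements of $\overline{\mathrm{Spt}}1_o(n-2)$, with the images sorted into $\overline{P}_e(n-1)$ or $\overline{P}_{oex}(n-1)$ by parity. Your verification of bijectivity (overlined versus non-overlined least part of the image, stratified by the value $t$) is more detailed than the paper's, but the underlying bijection is identical.
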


\begin{theorem}\cite[Corollary 4.3]{rishabh}\label{cor4.3}
For $n>2$, we have $$\overline{\mathrm{spt}}1_o'(n)+\overline{\mathrm{spt}}1_o'(n-2)=-\overline{p}_{oex}'(n-1).$$
\end{theorem}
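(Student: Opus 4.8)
The plan is to rewrite both sides as signed enumerations of overpartitions and to cancel most terms by sign‑reversing involutions. Since $\overline{\mathrm{spt}}1_o'(m)=b_e(1,m)-b_o(1,m)$ equals $\sum_{\pi}(-1)^{p(\pi)}$, where $\pi$ runs over $\overline{\mathrm{Spt}}1_o(m)$ and $p(\pi)$ denotes the number of parts of $\pi$ exceeding $s(\pi)$, and since $\pi\mapsto(s(\pi),\mu(\pi))$ — with $\mu(\pi)$ the overpartition made of the parts above $s(\pi)$ — is a bijection onto the pairs $(s,\mu)$ with $s\ge1$, $\mu$ an overpartition all of whose parts are $>s$ and $\not\equiv s\pmod2$, and $|\mu|=m-s$, while $p(\pi)=\ell(\mu)$, the left–hand side becomes
$$\overline{\mathrm{spt}}1_o'(n)+\overline{\mathrm{spt}}1_o'(n-2)=\sum_{(\delta,s,\mu)}(-1)^{\ell(\mu)},$$
the sum over all triples with $\delta\in\{0,2\}$ (the label $\delta=2$ recording the $(n-2)$‑summand), $s\ge1$, $\mu$ as above, and $\delta+s+|\mu|=n$; write $\mathcal L(n)$ for this set. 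Similarly $-\overline{p}_{oex}'(n-1)=\sum_{\tau\in\overline{P}_{oex}(n-1)}(-1)^{\ell(\tau)+1}$. I would then split $\mathcal L(n)$ by the parity of $s$: the point (visible already at the level of generating functions, via the telescoping identities $(1+q^2)\sum_{i\ge1}q^{2i-1}P_i=q$ and $(1+q^2)\sum_{i\ge1}q^{2i}Q_i=q-q(1-q)Q_1$ with $P_i=\prod_{m\ge i}\frac{1-q^{2m}}{1+q^{2m}}$ and $Q_i=\prod_{m\ge i}\frac{1-q^{2m+1}}{1+q^{2m+1}}$) is that the odd‑$s$ part cancels identically for $n>1$, while the even‑$s$ part contributes exactly $-\overline{p}_{oex}'(n-1)$.

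\textbf{Odd $s$.} Here I would show $\sum_{s\text{ odd}}(-1)^{\ell(\mu)}=0$ for $n>1$. If $n$ is even this is vacuous, since $s$ odd and all parts of $\mu$ even forces $\delta+s+|\mu|$ odd. If $n$ is odd, write $s=2i-1$, so the parts of $\mu$ are even and $\ge2i$ and $\mu=2\nu$ for a unique overpartition $\nu$ into parts $\ge i$; I would give a fixed‑point‑free sign‑reversing involution on the triples $(\delta,i,\nu)$ with $\delta+(2i-1)+2|\nu|=n$. The main move transfers one part between $\nu$ and $(\delta,i)$ — typically lowering $i$ by $1$ and adjoining to $\nu$ a new, non‑overlined smallest part $i-1$, the reverse being applied when $i=1$ and the smallest part of $\nu$ occurs non‑overlined — each transfer changing $\ell(\nu)$ by one and hence reversing the sign; a pair of auxiliary rules (replacing an overlined part by two equal non‑overlined ones, and freeing $\delta$ as a part when $\delta=2$) disposes of the configurations the main move misses.

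\textbf{Even $s$.} Here I would prove $\sum_{s\text{ even}}(-1)^{\ell(\mu)}=-\overline{p}_{oex}'(n-1)$ by matching the even‑$s$ triples with $\overline{P}_{oex}(n-1)$, which I split by whether $\overline1$ occurs: deleting $\overline1$ identifies the elements of $\overline{P}_{oex}(n-1)$ containing $\overline1$ with the overpartitions of $n-2$ into odd parts $\ge3$, and the rest are the overpartitions of $n-1$ into odd parts $\ge3$. On the other side, the triples with $s=2,\ \delta=0$ are precisely the pairs $(2,\mu)$ with $\mu$ an overpartition of $n-2$ into odd parts $\ge3$, and prepending $\overline1$ is a sign‑compatible bijection onto the first family. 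The remaining even‑$s$ triples (those with $s=2,\ \delta=2$, or $s\ge4$) I would collapse by a sign‑reversing involution — the combinatorial shadow of the tail relation $Q_{i-1}(1+q^{2i-1})=(1-q^{2i-1})Q_i$, which pushes the ``floor'' $s$ down two units at a time while trading the part $s-1$ into and out of $\mu$ and toggling $\delta$ — leaving survivors in sign‑compatible bijection with the overpartitions of $n-1$ into odd parts $\ge3$. Assembling the two bijections reconstitutes all of $\overline{P}_{oex}(n-1)$ with the weight $(-1)^{\ell(\tau)+1}$, and with the odd‑$s$ vanishing this yields the identity for every $n>1$, in particular for $n>2$.

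The hard part will be pinning down the involutions in both steps: the naive transfers can produce illegal overpartitions — a non‑overlined $1$, two overlined copies of the same value, or a part of $\mu$ not exceeding $s$ — so the prescription must carry the overline/multiplicity bookkeeping consistently, and it is in arranging this that the argument does its real work. One could alternatively try to obtain Theorem \ref{cor4.3} by refining the bijection used to prove Theorem \ref{cor4.2}, checking that the two preimages of each overpartition counted by $\overline{p}_e(n-1)$ carry opposite values of $p(\pi)\bmod2$ while the preimage of $\tau\in\overline{P}_{oex}(n-1)$ carries $p(\pi)\equiv\ell(\tau)+1\pmod2$; whether that bijection is ``sign‑aware'' in this way is itself the crux of that route.
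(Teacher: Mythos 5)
Your overall skeleton — split the signed sum by the parity of $s(\pi)$, kill the odd-$s$ contribution by a sign-reversing involution, and match the even-$s$ contribution against $\overline{P}_{oex}(n-1)$ with a sign flip — is exactly the structure of the paper's proof, and your generating-function bookkeeping ($P_i$, $Q_i$, the telescoping identities) correctly predicts where the cancellation must happen. But what you have written is a plan, not a proof: the entire content of a combinatorial proof of this identity is the explicit maps, and you defer precisely those ("the hard part will be pinning down the involutions in both steps"). Worse, the one move you do describe concretely does not work: in the odd-$s$ step you propose "lowering $i$ by $1$ and adjoining to $\nu$ a new non-overlined smallest part $i-1$," but since $\mu=2\nu$ this adds $2(i-1)$ to $|\mu|$ while $s=2i-1$ drops by $2$, so the total weight changes by $2i-4$, which is nonzero for $i\neq 2$ and cannot be absorbed by toggling $\delta\in\{0,2\}$ when $i\ge 3$. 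So the "main move" is not weight-preserving, and the unspecified "auxiliary rules" would have to carry essentially all of the argument.

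For comparison, the paper's maps are much lighter than what you envision. The odd-$s$ cancellation is a single pairing anchored at $s(\pi)=1$: if $s_2(\pi)$ is non-overlined, delete the part $1$ and replace $s_2(\pi)$ by $s_2(\pi)-1$ (landing in $\overline{\mathrm{Spt}}1_o(n-2)$ with odd smallest part); if $s_2(\pi)$ is overlined, delete the part $1$ and replace $\overline{s_2(\pi)}$ by $s_2(\pi)+1$ (staying in $\overline{\mathrm{Spt}}1_o(n)$ with odd smallest part $>1$). Each move changes the number of parts exceeding the smallest part by one, and together the two rules exhaust all odd-$s$ overpartitions in $\overline{\mathrm{Spt}}1_o(n)\cup\overline{\mathrm{Spt}}1_o(n-2)$. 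The even-$s$ step then needs \emph{no} further cancellation, contrary to your plan of an extra "tail-relation" involution: replacing the even smallest part $s$ by $\overline{s-1}$ (for the $n$-copy) or by $s+1$ (for the $(n-2)$-copy) is already a bijection onto all of $\overline{P}_{oex}(n-1)$ — the inverse is determined by whether the smallest value of $\tau$ occurs non-overlined — and it reverses the sign uniformly, producing the factor $-1$. If you want to salvage your route, the fastest fix is to discard your transfer moves and verify that these two explicit substitutions have the stated domains, images, and sign behaviour.
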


They sought combinatorial proofs of the above results. In Section \ref{sec:proof123}, we provide such proofs. We present the following theorem as a refined version. In Section \ref{sec:proof4} we prove Theorem \ref{thm4} combinatorially and discuss how Theorems \ref{cor4.2} and \ref{cor4.3} can be derived from Theorem \ref{thm4}.

\begin{theorem}\label{thm4}
For $n>2$, we have
\begin{align}
b_e(1,n)+b_e(1,n-2) & =\overline{p}_e(n-1)+c_o(n-1),\label{id:e}\\
b_o(1,n)+b_o(1,n-2) & =\overline{p}_e(n-1)+c_e(n-1).\label{id:o}
\end{align}
\end{theorem}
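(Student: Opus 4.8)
The plan is to set up an explicit weight-preserving bijection (or sign-reversing involution where signs are present) between the relevant sets of overpartitions, tracking the parity of the number of parts above $s(\pi)$ throughout. The two identities \eqref{id:e} and \eqref{id:o} have the same shape, so I would treat them in parallel and let a single construction, with a parity bookkeeping variable, deliver both at once. Concretely, I would describe the left-hand side as counting pairs: an overpartition in $\overline{\mathrm{Spt}}1_o(n)$ together with a choice of ``shift class'' recording whether it contributes to the $n$ term or the $n-2$ term, and I would split each such object according to whether $s(\pi)=1$ or $s(\pi)\ge 2$. This mirrors the two-term recurrence structure that already appeared in the proof of Theorem \ref{cor4.1}.

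The first key step is to analyze the $s(\pi)\ge 2$ piece. If $\pi\in\overline{\mathrm{Spt}}1_o(n)$ has $s(\pi)=m\ge 2$, then the unique non-overlined part equal to $m$ can be deleted, and all remaining parts are strictly larger than $m$ and of opposite parity to $m$; subtracting an appropriate fixed amount (or reindexing) from each remaining part should land the configuration in one of the target sets on the right-hand side, with the parity of the number of parts above $s(\pi)$ preserved exactly. Here I expect the ``$\overline{p}_e(n-1)$'' term to absorb one of the two shift classes (say the one feeding the $n-2$ contribution, after removing a part of size $2$), since overpartitions into even parts are precisely what one gets after stripping a smallest part $s(\pi)=2$ and relabeling. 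The second key step is the $s(\pi)=1$ piece: removing the single non-overlined $1$ leaves an overpartition into parts $\ge 2$ all of which are even (opposite parity to $1$), equivalently — after halving — an arbitrary overpartition, and the two shift classes $n$ and $n-2$ together with the parity count of parts should reassemble into $c_o(n-1)$ in \eqref{id:e} and $c_e(n-1)$ in \eqref{id:o}. The crucial observation making the parities match is that $\overline{p}_{oex}$ counts overpartitions into odd parts with no non-overlined $1$, and there is a standard doubling/undoubling correspondence between overpartitions into odd parts with no non-overlined $1$ and overpartitions into even parts $\ge 2$ (multiply each part by $2$... no: rather, the bijection comes from conjugation-type maps), which transports the ``number of parts'' statistic to the ``number of parts above $s(\pi)$'' statistic up to a controlled shift by $1$ coming from the deleted part or the $\pm2$.

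The main obstacle, I expect, is getting the parity bookkeeping to line up on the nose: the $+1$ discrepancies coming from (i) the deleted copy of $s(\pi)$, (ii) the removed part of size $2$ in the $n-2$ branch, and (iii) the shift between ``all parts'' and ``parts above $s(\pi)$'' must cancel in pairs so that $b_e$ maps to $\overline{p}_e + c_o$ and not $\overline{p}_e + c_e$ (and vice versa). I would handle this by fixing, once and for all, a reference involution on each target set that flips the parity of the number of parts — for instance, overlining or de-overlining the largest part when it is legal — and using it to correct any single off-by-one parity mismatch; the places where this corrective involution has no fixed points are exactly the places where the naive bijection already has the right parity, and the fixed points (if any) must be shown to match the ``diagonal'' terms $b_e(1,n)$ vs.\ $b_o(1,n)$ at the boundary. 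Once \eqref{id:e} and \eqref{id:o} are established, Theorem \ref{cor4.2} follows by adding them (using $b_e+b_o=\overline{\mathrm{spt}}1_o$ and $c_e+c_o=\overline{p}_{oex}$, so the right side becomes $2\overline{p}_e(n-1)+\overline{p}_{oex}(n-1)$), and Theorem \ref{cor4.3} follows by subtracting them (the right side becomes $c_o(n-1)-c_e(n-1)=-\overline{p}_{oex}'(n-1)$), as indicated in the introduction.
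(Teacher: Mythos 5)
There is a genuine gap here: what you have written is a plan with several acknowledged unknowns (``should land'', ``I expect'', ``the main obstacle, I expect'') rather than a proof, and the concrete guesses you do commit to point in the wrong direction. The correct decomposition is by the parity of $s(\pi)$, not by $s(\pi)=1$ versus $s(\pi)\ge 2$ and not by shift class. Since every part of $\pi$ other than $s(\pi)$ has parity opposite to $s(\pi)$, one only needs to modify the single part $s(\pi)$: send $s(\pi)\mapsto \overline{s(\pi)-1}$ if $\pi$ comes from the $n$ copy and $s(\pi)\mapsto s(\pi)+1$ if it comes from the $n-2$ copy (with the one special case $s(\pi)=1$ in the $n$ copy, where the part $1$ is simply deleted). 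When $s(\pi)$ is even this produces an overpartition into odd parts with no non-overlined $1$ whose number of parts is one more than the (even) number of parts above $s(\pi)$, hence odd, i.e.\ an element of $C_o(n-1)$; when $s(\pi)$ is odd (including $s(\pi)=1$) it produces an overpartition into even parts, i.e.\ an element of $\overline{P}_e(n-1)$. So $\overline{P}_e(n-1)$ absorbs the odd-$s(\pi)$ overpartitions from \emph{both} shift classes, not ``the one feeding the $n-2$ contribution''; the $s(\pi)=1$ piece lands in $\overline{P}_e(n-1)$, not in $C_o(n-1)$ as you predict; and no global rescaling, halving, or odd--even correspondence is needed. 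Your suggestion to subtract a fixed amount from \emph{each} remaining part cannot even preserve the target weight $n-1$, since the total subtracted would depend on the number of parts.

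Your fallback device for repairing parity mismatches is also incoherent as stated: overlining or de-overlining the largest part does not change the number of parts, so it cannot ``flip the parity of the number of parts''; and composing with an extra involution on the target sets after the fact would break injectivity rather than restore a count. In the actual argument no correction step exists: the parts above $s(\pi)$ are untouched, exactly one bottom part is modified or deleted in a way determined by the case, and the part-count bookkeeping is automatic. The final paragraph of your proposal, deriving Theorems \ref{cor4.2} and \ref{cor4.3} from Theorem \ref{thm4} by adding and subtracting \eqref{id:e} and \eqref{id:o}, is correct and agrees with the paper.
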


\section{Combinatorial proof of Theorems \ref{cor4.1}, \ref{cor4.2} and \ref{cor4.3}}\label{sec:proof123}

\begin{proof}[Proof of Theorem \ref{cor4.1}]
Let $s_2(\pi)$ denote the part of a overpartition immediately greater than $s(\pi)$. We will retain this notation for the rest of the article.

Now, we construct the following maps from $\overline{\mathrm{Spt}}1(n)$ to $\overline{P}_{ex}(n)$ as:
$$\pi
\begin{cases}
\xrightarrow{f_1}	\pi,
 & \hspace{4mm} \text{ if } s(\pi)> 1,\\
\xrightarrow{f_2} \left(\ldots,\overline{s(\pi)}\right),
& \hspace{4mm} \text{ if } s(\pi)=1.
\end{cases}
$$

Next, we construct the following maps from $\overline{\mathrm{Spt}}1(n-1)$ to $\overline{P}_{ex}(n)$ as follows:
$$\pi
\begin{cases}
\xrightarrow{f_3}	\left(\ldots,\overline{s(\pi)+1}\right),
 & \hspace{4mm} \text{ if } s_2(\pi)-s(\pi)> 1,\\
\xrightarrow{f_4} \left(\ldots,s(\pi)+1\right),
& \hspace{4mm} \text{ if } s_2(\pi)-s(\pi)=1.
\end{cases}
$$

Note that,
\begin{itemize}
    \item The smallest part of $f_1(\pi)$ is greater than 1, non-overlined, and appears only once.
    \item The smallest part of $f_2(\pi)$ is $\overline{1}$.
    \item The smallest part of $f_3(\pi)$ is greater than 1, and overlined.
    \item The smallest part of $f_4(\pi)$ is greater than 1, non-overlined, and $s(\pi)$ appear at least twice or $\overline{s(\pi)}$ appears.
\end{itemize}
Hence, we obtain all overpartitions in $\overline{P}_{ex}(n)$, which completes the proof.

An example is shown in the following figure.

\noindent	\begin{tikzpicture}[
		box/.style={rounded corners, draw, minimum width=2.5cm, minimum height=5cm, thick},
		innerbox/.style={rounded corners, draw, minimum width=1.8cm, inner sep=5pt, align=center},
		arrow/.style={-Stealth, thick}]
		
		\node[box] (Spt6) {};
		\node[below=2pt of Spt6] {$\overline{\mathrm{Spt}}1(6)$};
		
		\node[innerbox, anchor=north] (Spt6_top) at ([yshift=-10pt]Spt6.north) {
			$(6)$ \\ $(4,2)$ \\ $(\overline{4},2)$
		};
		\node[innerbox, anchor=south] (Spt6_bottom) at ([yshift=7pt]Spt6.south) {
			$(5,1)$ \\ $(\overline{5},1)$ \\ $(3,2,1)$ \\ $(\overline{3},2,1)$ \\ $(3,\overline{2},1)$ \\ $(\overline{3},\overline{2},1)$ 
		};
		
		\node[box, right=0.5cm of Spt6, minimum height=4.5cm] (Spt5) {};
		\node[below=2pt of Spt5] {$\overline{\mathrm{Spt}}1(5)$};
		
		\node[innerbox, anchor=north] (Spt5_top) at ([yshift=-15pt]Spt5.north) {
			$(5)$ \\ $(4,1)$ \\ $(\overline{4},1)$
		};
		\node[innerbox, anchor=south] (Spt5_bottom) at ([yshift=10pt]Spt5.south) {
		$(3,2)$ \\ $(\overline{3},2)$ \\ $(2,2,1)$ \\ $(\overline{2},2,1)$
		};
		
		\node[box, right=1.5cm of Spt5, minimum width=5cm,minimum height=5.5cm] (Pex6) {};
		\node[below=9pt of Pex6] {$\overline{P}_{ex}(6)$};
		
		\node[innerbox] (Pex_TL) at ([xshift=-1.2cm, yshift=1.6cm]Pex6.center) {
$(6)$ \\ $(4,2)$ \\ $(\overline{4},2)$
		};
		\node[innerbox] (Pex_TR) at ([xshift=1.2cm, yshift=1.6cm]Pex6.center) {
		$(\overline{6})$ \\	$(4,\overline{2})$ \\ $(\overline{4},\overline{2})$
		};
		\node[innerbox] (Pex_BL) at ([xshift=-1.2cm, yshift=-0.9cm]Pex6.center) {
	$(5,\overline{1})$ \\ $(\overline{5},\overline{1})$ \\ $(3,2,\overline{1})$ \\ $(\overline{3},2,\overline{1})$ \\ $(3,\overline{2},\overline{1})$ \\ $(\overline{3},\overline{2},\overline{1})$ 
		};
		\node[innerbox] (Pex_BR) at ([xshift=1.2cm, yshift=-1cm]Pex6.center) {
		$(3,3)$ \\ $(\overline{3},3)$ \\ $(2,2,2)$ \\	$(\overline{2},2,2)$
		};
		
		
		\draw[arrow] (Spt6_top.north) to [out=40, in=145] (Pex_TL.north);
		
		\draw[arrow] (Spt5_top.north) to [out=40, in=120] (Pex_TR.north);
		
		\draw[arrow] (Spt6_bottom.south) to [out=-30, in=220] (Pex_BL.south);
		
		\draw[arrow] (Spt5_bottom.south) to [out=-30, in=230] (Pex_BR.south);
\node at (5, -4.1) {Example for $n= 6$.};
	\end{tikzpicture}

\end{proof}

\begin{proof}[Proof of Theorem \ref{cor4.2}]
For each $\pi\in \overline{\mathrm{Spt}}1_o(n)$ with $s(\pi)=1$, removing the part 1 from $\pi$ yields a distinct overpartition in $\overline{P}_e(n-1)$. The reverse process also yields a distinct overpartition in $\overline{\mathrm{Spt}}1_o(n)$ for each overpartition in $\overline{P}_e(n-1)$. Consequently, the total number of such overpartition equals $\overline{p}_e(n-1)$.

Thus, it remains to consider the overpartitions in $\overline{\mathrm{Spt}}1_o(n)\bigcup \overline{\mathrm{Spt}}1_o(n-2)$. We construct the following map from this set of overpartitions to $\overline{P}_e(n-1)\bigcup \overline{P}_{oex}(n-1)$.

If $\pi\in \overline{\mathrm{Spt}}1_o(n)$ with $s(\pi)>1$, then
$$\pi\to
    \begin{cases}
        \left(\ldots,\overline{s(\pi)-1}\right), & \text{ if } s(\pi) \text{ even,}\\
        &\hspace{-15mm} (\textit{which belongs to } \overline{P}_{oex}(n-1))\\
        \left(\ldots,\overline{s(\pi)-1}\right), & \text{ if } s(\pi) \text{ odd,}\\
        &\hspace{-15mm} (\textit{which belongs to } \overline{P}_e(n-1))
    \end{cases}
$$
and if $\pi\in \overline{\mathrm{Spt}}1_o(n-2)$, then
$$\pi\to
    \begin{cases}
        \left(\ldots,s(\pi)+1\right), & \text{ if } s(\pi) \text{ even,}\\
           & \hspace{-15mm} (\textit{which belongs to } \overline{P}_{oex}(n-1))\\
        \left(\ldots,s(\pi)+1\right), & \text{ if } s(\pi) \text{ odd.}\\
          &\hspace{-15mm} (\textit{which belongs to } \overline{P}_e(n-1))
    \end{cases}
$$
This completes the proof.

An example is shown in the following figure.

\noindent	\begin{tikzpicture}[
		box/.style={draw, rounded corners=2pt, inner sep=5pt, font=\small, align=center},
		container/.style={draw, thick, rounded corners=5pt, inner sep=8pt},
		arrow/.style={-Stealth, thick, shorten >=2pt, shorten <=2pt}
		]
		\node[box] (spt7_top) {$(6,1)$, \quad $(\overline{6},1)$ \\
			 $(4,2,1)$, \quad $(\overline{4},2,1)$\\
			 $(4,\overline{2},1)$, \quad $(\overline{4},\overline{2},1)$\\
			 $(2,2,2,1)$, \quad $(\overline{2},2,2,1)$};
		\node[box, below left=0.5cm and -0.8cm of spt7_top] (spt7_bl) {$(5,2)$ \\ $(\overline{5},2)$};
		\node[box, below right=0.5cm and -1.8cm of spt7_top] (spt7_br) {$(7)$ \\
			$(4,3)$, \quad $(\overline{4},3)$};
		\node[container, fit=(spt7_top) (spt7_bl) (spt7_br), label=below:{$\hspace{-6mm}\overline{\mathrm{Spt}}1_o(7)$}] (Spt7) {};

		\node[box, below=1.4cm of Spt7] (spt5_top) {$(3,2)$\\ $(\overline{3},2)$};
		\node[box, below=0.3cm of spt5_top] (spt5_bot) {$(5)$\\
			$(4,1)$, \quad $(\overline{4},1)$\\
			$(2,2,1)$, \quad $(\overline{2},2,1)$};
		\node[container, fit=(spt5_top) (spt5_bot), label=below:{$\overline{\mathrm{Spt}}1_o(5)$}] (Spt5) {};
		
		\node[box, right=3cm of spt7_top, yshift=-0.7cm] (pex6_top) {$(6)$, \quad $(4,2)$, \quad $(\overline{4},2)$\\
			$(2,2,2)$, \quad $(\overline{2},2,2)$};
		\node[box, below=0.3cm of pex6_top] (pex6_bot) {$(\overline{6})$\\
			$(4,\overline{2})$, \quad $(\overline{4},\overline{2})$};
		\node[container, fit=(pex6_top) (pex6_bot), label=below:{$\overline{P}_e(6)$}] (Pex6) {};
		
		\node[box, right=5.0cm of spt5_top, yshift=-0.2cm] (poex6_top) {$(3,3)$, \quad $(\overline{3},3)$};
		\node[box, below=0.3cm of poex6_top] (poex6_bot) {$(5,\overline{1})$, \quad $(\overline{5},\overline{1})$};
		\node[container, fit=(poex6_top) (poex6_bot), label=below:{$\overline{P}_{oex}(6)$}] (Poex6) {};
		
		\draw[arrow] (spt7_top.east) to [out=35, in=105] (Pex6.north);
		\draw[arrow] (spt7_bl.east) -- (poex6_bot.west);
		\draw[arrow] (spt7_br.east) -- (pex6_bot.west);
		
		\draw[arrow] (spt5_top.east) -- (poex6_top.west);
		\draw[arrow] (spt5_bot.east) -- (pex6_top.west);
\node at (4.0, -8.1) {Example for $n= 7$.};		
	\end{tikzpicture}

\end{proof}

\begin{proof}[Proof of Theorem \ref{cor4.3}]
Here, we will use the definitions of SAME nature and OPPOSITE nature as we introduced in the article \cite{ndb2026pjm}.

Let $\pi\in \overline{\mathrm{Spt}}1_o(n)$. We construct the following map.
$$\pi\to
\begin{cases}
    \left(\ldots,s_2(\pi)-1\right), & \text{ if } s(\pi)=1 \text{ and } s_2(\pi) \text{ is not overlined},\\
    &\hspace{-20mm} (\textit{This overpartition belongs to } \overline{\mathrm{Spt}}1_o(n-2) \textit{ and is of OPPOSITE nature.})\\
    \left(\ldots,s_2(\pi)+1\right), & \text{ if } s(\pi)=1 \text{ and } s_2(\pi) \text{ is overlined}.\\
    &\hspace{-20mm} (\textit{This overpartition belongs to } \overline{\mathrm{Spt}}1_o(n) \textit{ and is of OPPOSITE nature.})
\end{cases}
$$
Thus, the overpartitions with even $s(\pi)$ remain in $\overline{\mathrm{Spt}}1_o(n)\bigcup\overline{\mathrm{Spt}}1_o(n-2)$. We now construct a map from this subset to $\overline{P}_{oex}(n-1)$.
$$\pi\to
\begin{cases}
    \left(\ldots,\overline{s(\pi)-1}\right), & \text{ if } \pi\in \overline{\mathrm{Spt}}1_o(n),\\
    &\hspace{-15mm} (\textit{OPPOSITE nature.})\\
    \left(\ldots,s(\pi)+1\right), & \text{ if } \pi\in \overline{\mathrm{Spt}}1_o(n-2).\\
    &\hspace{-15mm} (\textit{OPPOSITE nature.})
\end{cases}
$$
This completes the proof.

An example is shown in the following figure.

\noindent\begin{tikzpicture}[box/.style={draw, rounded corners, inner sep=5pt, font=\small},
		group/.style={draw, thick, rounded corners, inner sep=10pt},
		arrow/.style={-Stealth, thick, shorten >=2pt, shorten <=2pt}]
		
		\node[box] (L1) at (0,4) {\shortstack{$(6,2,1)$ \\ $(\overline{6},2,1)$ \\ $(4,4,1)$ \\ $(\overline{4},4,1)$ \\ $(2,2,2,2,1)$ \\ $(\overline{2},2,2,2,1)$}};
		\node[box, right=0.5cm of L1] (L2) {\shortstack{$(8,1)$ \\ $(4,2,2,1)$ \\ $(\overline{4},2,2,1)$ \\ $(4,\overline{2},2,1)$ \\ $(\overline{4},\overline{2},2,1)$}};
		\node[box, below=0.5cm of L1] (L3) {\shortstack{$(6,\overline{2},1)$ \\ $(\overline{6},\overline{2},1)$}};
		\node[box, below=0.3cm of L2] (L4) {$(\overline{8},1)$};
		\node[box, below=0.2cm of L4] (L5) {\shortstack{$(6,3)$ \\ $(\overline{6},3)$}};
		\node[box, below=0.8cm of L3] (L6) {$(9)$};
		\node[box, below=0.1cm of L5] (L7) {\shortstack{$(7,2)$ \\ $(\overline{7},2)$ \\ $(5,4)$ \\ $(\overline{5},4)$}};
		
		\node[group, fit=(L1) (L2) (L6) (L7)] (G9) {};
		\node[below=2pt of G9] {$\overline{\mathrm{Spt}}1_o(9)$};
		
		\node[box] (R1) at (7,4) {\shortstack{$(7)$ \\ $(4,2,1)$ \\ $(\overline{4},2,1)$ \\ $(4,\overline{2},1)$ \\ $(\overline{4},\overline{2},1)$}};
		\node[box, right=0.5cm of R1] (R2) {\shortstack{$(6,1)$ \\ $(\overline{6},1)$ \\ $(4,3)$ \\ $(\overline{4},3)$ \\ $(2,2,2,1)$ \\ $(\overline{2},2,2,1)$}};
		\node[box, below=0.5cm of R2] (R3) {\shortstack{$(5,2)$ \\ $(\overline{5},2)$}};
		
		\node[group, fit=(R1) (R2) (R3)] (G7) {};
		\node[below=2pt of G7] {$\overline{\mathrm{Spt}}1_o(7)$};
		
		\node[box] (B1) at (7.4,-1.7) {\shortstack{$(7,\overline{1})$ \\ $(\overline{7},\overline{1})$ \\ $(5,\overline{3})$ \\ $(\overline{5},\overline{3})$}};
		\node[box, right=0.8cm of B1] (B2) {\shortstack{$(5,3)$ \\ $(\overline{5},3)$}};
		
		\node[group, fit=(B1) (B2)] (G8) {};
		\node[below=2pt of G8] {$\overline{P}_{oex}(8)$};
		
		\draw[arrow] (L1.north) to[bend left=20] (R2.north);
		\draw[arrow] (L2.north) to[bend left=20] (R1.north);
		
		\draw[arrow] (L3.east) -- (L5.west);
		\draw[arrow] (L4.south west) -- (L6.north east);
		
		\draw[arrow] (L7.east) -- (B1.west);
		\draw[arrow] (R3.south) -- (B2.north);
		\node at (4.5, -3.9) {Example for $n= 9$.};
	\end{tikzpicture}

\end{proof}

\section{Combinatorial proof of Theorem \ref{thm4}}\label{sec:proof4}

\begin{proof}[Combinatorial proof of \eqref{id:e}]
We construct a bijection from $B_e(1,n)\bigcup B_e(1,n-2)$ to $\overline{P}_e(n-1)\bigcup C_o(n-1)$ as follows.

For given $\pi\in B_e(1,n)\bigcup B_e(1,n-2)$, there are two cases.
\begin{description}
\item[Case I] $s(\pi)\equiv 0 \pmod 2$. Then,
$$\pi\to
    \begin{cases}
        \left(\ldots,\overline{s(\pi)-1}\right), & \text{ if } \pi\in B_e(1,n),\\
        \left(\ldots,s(\pi)+1\right), & \text{ if } \pi\in B_e(1,n-2).
    \end{cases}
$$

\item[Case II] $s(\pi)\equiv 1 \pmod 2$. Then,
$$\pi\to
    \begin{cases}
        \begin{cases}
        \left(\ldots,s_2(\pi)\right),  & \text{ when } s(\pi)=1\\
            \left(\ldots,\overline{s(\pi)-1}\right), & \text{ when } s(\pi)>1
        \end{cases} \ \ ,
        & \text{ if } \pi\in B_e(1,n),\\
        \left(\ldots,s(\pi)+1\right), & \text{ if } \pi\in B_e(1,n-2).
    \end{cases}
$$
\end{description}

All image overpartitions $\mu$ are distinct in both cases. Moreover, $\mu\in C_o(n-1)$ in Case I and $\mu\in \overline{P}_e(n-1)$ in Case II. This completes the proof.

An example is shown in the following figure.\\

\noindent\begin{tikzpicture}[box/.style={draw, rounded corners, inner sep=5pt, font=\small},
	group/.style={draw, thick, rounded corners, inner sep=10pt},
	arrow/.style={-Stealth, thick, shorten >=2pt, shorten <=2pt}]
	
	\node[box] (L1)  at (2,3.5) {\shortstack{$(6,2,1)$, \quad $(\overline{6},2,1)$\\
			$(6,\overline{2},1)$, \quad  $(\overline{6},\overline{2},1)$ \\
			$(4,4,1)$, \quad $(\overline{4},4,1)$ \\
			$(2,2,2,2,1)$, \quad $(\overline{2},2,2,2,1)$}};
	\node[box, below=0.5cm of L1] (L2) {$(9)$};
	\node[group, fit=(L1) (L2)] (G9) at (2,3) {};
	\node[below=2pt of G9] {$B_e(1,9)$};
	
	\node[box] (G7) at (8,3) {\shortstack{$(7)$ \\
			$(4,2,1)$, \quad $(\overline{4},2,1)$ \\
			 $(4,\overline{2},1)$, \quad $(\overline{4},\overline{2},1)$}};
	\node[below=2pt of G7] {$B_e(1,7)$};
	  
	\node[box] (B1) at (1.0,-1.7) {\shortstack{$(\overline{8})$}};
	\node[box, right=0.8cm of B1] (B2) {\shortstack{$(6,2)$, \quad $(\overline{6},2)$\\
			$(6,\overline{2})$, \quad  $(\overline{6},\overline{2})$ \\
			$(4,4)$, \quad $(\overline{4},4)$ \\
			$(2,2,2,2)$, \quad $(\overline{2},2,2,2)$}};
		\node[box, right=0.8cm of B2] (B3) {\shortstack{$(8)$ \\
				$(4,2,2)$, \quad $(\overline{4},2,2)$ \\
				$(4,\overline{2},2)$, \quad $(\overline{4},\overline{2},2)$}};
	
	\node[group, fit=(B1) (B2) (B3)] (G8) {};
	\node[below=2pt of G8] {$\overline{P}_e(8)$};

	\draw[arrow] (L1.south) to[bend left=30] (B2.north);
	\draw[arrow] (L2.west) to[bend right=70] (B1.west);
	\draw[arrow] (G7.east) to[bend left=50] (B3.north);
	\node at (5.5, -4.3) {Example for $n= 9$.};
\end{tikzpicture}

\end{proof}

The proof of \eqref{id:o} is similar to that of \eqref{id:e}. So, we omit. Theorem \ref{cor4.2} can be derived from Theorem \ref{thm4} as follows. We have,
\begin{align*}
\overline{\mathrm{spt}}1_o(n)+\overline{\mathrm{spt}}1_o(n-2)
&=(b_e(1,n)+b_o(1,n))+(b_e(1,n-2)+b_o(1,n-2))\\
&=(b_e(1,n)+b_e(1,n-2))+(b_o(1,n)+b_o(1,n-2))\\
&=2\overline{p}_e(n-1)+c_o(n-1)+c_e(n-1) \text{\quad (using \eqref{id:e} and \eqref{id:o})}\\
&=2\overline{p}_e(n-1)+\overline{p}_{oex}(n-1).
\end{align*}
Theorem \ref{cor4.3} follows in a similar way.


\section*{Acknowledgements}
The third author was partially supported by an institutional fellowship for doctoral research from Tezpur University, Assam, India.

\end{document}